\documentclass[a4paper,10pt]{amsart}

\usepackage[T1]{fontenc}

\usepackage{amsrefs}

\usepackage{microtype}

\newtheorem{proposition}{Proposition}
\newtheorem{theorem}[proposition]{Theorem}
\newtheorem{lemma}[proposition]{Lemma}

\title[Existence of eigenvalues and eigenvectors]{Proving the existence of eigenvalues and eigenvectors by Weierstrass's theorem}
\author{Jean Van Schaftingen}
\address{Universit\'e catholique de Louvain\\
Institut de Recherche en Math\'ematique et Physique (IRMP)\\
Chemin du Cyclotron 2 bte L7.01.01\\
1348 Louvain-la-Neuve\\
Belgium}
\email{Jean.VanSchaftingen@uclouvain.be}
\date{\today}
\subjclass[2010]{15A18}
\keywords{Spectral theory; eigenvalue; eigenvector; Weierstrass's theorem; linear operators on complex vector spaces} 
\setcounter{tocdepth}{1}

\newcommand{\C}{\mathbb{C}}
\newcommand{\R}{\mathbb{R}}
\newcommand{\N}{\mathbb{N}}

\newcommand{\norm}[1]{\lVert #1 \rVert}
\newcommand{\bignorm}[1]{\bigl\lVert #1 \bigr\rVert}
\newcommand{\abs}[1]{\lvert #1 \rvert}
\newcommand{\st}{\, : \, }

\begin{document}

\begin{abstract}
I propose a proof of the existence of the existence of eigenvectors and eigenvalues in the spirit of Argand's proof of the fundamental theorem of algebra. The proof only relies on Weierstrass's theorem, the definition of the inverse of a linear operator and algebraic identities.
\end{abstract}

\maketitle

A fundamental result in the spectral theory of matrices or operators on finite-dimensional spaces is the existence of eigenvectors and eigenvalues.

\begin{theorem}
\label{theoremMain}
Let \(V\) be a finite-dimensional complex vector space and let \(T : V \to V\) be a linear operator. If \(V \ne \{0\}\), there exists \(\Bar{v} \in V \setminus \{0\}\) and \(\Bar{\lambda} \in \C\) such that 
\[
  T(\Bar{v})= \Bar{\lambda} \Bar{v}\:.
\]
\end{theorem}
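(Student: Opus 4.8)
The plan is to imitate Argand's proof of the fundamental theorem of algebra, with the resolvent operator $\lambda \mapsto (T - \lambda I)^{-1}$ in the role of the polynomial. I will argue by contradiction: suppose $T(v) \ne \lambda v$ for every $v \in V \setminus \{0\}$ and every $\lambda \in \C$. Then each $T - \lambda I$ is injective, hence bijective because $\dim V < \infty$, so $R(\lambda) := (T - \lambda I)^{-1}$ is a well-defined linear operator for all $\lambda \in \C$. Fix an inner product and its norm on $V$, the operator norm on the operators $V \to V$, and a vector $v_0 \in V \setminus \{0\}$, and set $g(\lambda) = \langle R(\lambda) v_0, v_0 \rangle$. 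I will show that $\abs{g}$ cannot attain a global maximum, which is the desired contradiction.

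All the analytic input about $g$ will be squeezed out of the purely algebraic identity $R(\lambda) = R(\mu) + (\lambda - \mu) R(\mu) R(\lambda)$, valid whenever both inverses exist. Iterating it yields, for each $\mu$ and each $k \geq 0$, the finite expansion $g(\mu + t) = \sum_{j=0}^{k} \langle R(\mu)^{j+1} v_0, v_0 \rangle\, t^{j} + t^{k+1} \langle R(\mu)^{k+1} R(\mu + t) v_0, v_0 \rangle$; when $\abs{t}\,\norm{R(\mu)} < 1$ this is a convergent geometric series in $t R(\mu)$, so $g$ is continuous, and specialising to $\mu = 0$ and writing $T - \lambda I = -\lambda(I - \tfrac{1}{\lambda} T)$ gives $\lambda g(\lambda) \to -\norm{v_0}^2$ as $\abs{\lambda} \to \infty$; in particular $g(\lambda) \to 0$ at infinity and $g \not\equiv 0$. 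By Weierstrass's theorem on a large closed disc of $\C \cong \R^2$, $\abs{g}$ then attains a global maximum $M$, necessarily positive since $g(\lambda) \ne 0$ for $\abs{\lambda}$ large; fix $\Bar\lambda$ with $\abs{g(\Bar\lambda)} = M$.

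The heart of the matter is the Argand step: at a point $\mu$ with $g(\mu) \ne 0$ near which $g$ is \emph{not} constant, let $k \geq 1$ be least with $c_k := \langle R(\mu)^{k+1} v_0, v_0 \rangle \ne 0$; the expansion then reads $g(\mu + t) = g(\mu) + c_k t^k + t^{k+1} O(1)$, with the $O(1)$ controlled by $\norm{R(\mu + t)} \leq 2\norm{R(\mu)}$ for small $t$, and choosing $t = \epsilon \zeta$ with $\zeta^{k} = g(\mu)/c_k$ and $\epsilon > 0$ small gives $g(\mu + t) = g(\mu)(1 + \epsilon^k + o(\epsilon^k))$, so $\abs{g(\mu + t)} > \abs{g(\mu)}$. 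Applied at $\mu = \Bar\lambda$ this contradicts the maximality of $M$ \emph{unless} $g$ is constant near $\Bar\lambda$, and dealing with this degenerate case is the step I expect to require the most care. I plan to handle it without analytic continuation: let $W$ be the union of all connected open sets containing $\Bar\lambda$ on which $g \equiv g(\Bar\lambda)$; it is open and connected. If $W = \C$ then $g$ is a nonzero constant, contradicting $g(\lambda) \to 0$. Otherwise $W$ is a proper nonempty open subset of the connected space $\C$, hence has a boundary point $\lambda_*$; there $g(\lambda_*) = g(\Bar\lambda)$ by continuity so $\abs{g(\lambda_*)} = M$, yet $g$ is not constant near $\lambda_*$ (an open set on which it were constant would take the value $g(\Bar\lambda)$ and would therefore be absorbed into $W$, forcing $\lambda_* \in W$). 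The Argand step at $\mu = \lambda_*$ then produces points where $\abs{g}$ exceeds $M$, which is absurd. Hence the initial assumption fails and $T$ has an eigenvector.
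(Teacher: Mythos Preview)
Your argument is correct but follows a genuinely different route from the paper's. The paper minimises \((v,\lambda)\mapsto \norm{T(v)-\lambda v}/\norm{v}\) over \((V\setminus\{0\})\times\C\); the main effort (lemma~\ref{lemmaMinimumExtension}, resting on the roots-of-unity identity of lemma~\ref{lemmaSumInverses}) shows that a nonzero minimum value \(\bar c\) would be attained again for every \(\lambda\) with \(\abs{\lambda-\bar\lambda}<\bar c\), so iteration spreads the minimising set of \(\lambda\)'s over all of \(\C\) and collides with the coercivity bound of lemma~\ref{lemmaInequality}. You instead assume the resolvent is everywhere defined, pass to the scalar function \(g(\lambda)=\langle R(\lambda)v_0,v_0\rangle\), maximise \(\abs{g}\) by Weierstrass, and run the Argand bump from the resolvent Taylor expansion; your connectedness argument on \(W\) is a bare-hands substitute for the identity theorem that disposes of the locally-constant case. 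This is precisely the ``assume \((T-\lambda I)^{-1}\) exists and derive a contradiction'' family the paper cites in its introduction (Kametani, Tornheim, Singh), made elementary by replacing Liouville or the maximum modulus principle with a direct argument. What the paper's approach buys is that the minimiser \((\bar v,\bar\lambda)\) \emph{is} the eigenpair, with no auxiliary inner product, no Neumann series, and no separate degenerate-case analysis; what your approach buys is a reduction to a function of one complex variable and closer kinship with the classical Banach-algebra proofs.
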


The vector \(\Bar{v}\) is then an eigenvector with associated eigenvalue \(\Bar{\lambda}\). This theorem is the first step in the study of spectral decomposition and invariant subspaces of \(T\).

The traditional proof that I was taught as an undergraduate student relies on the theory of determinants.
One first remarks that \(\lambda \in \C\) is an eigenvalue if and only if the characteristic polynomial vanishes: \( \det (T - \lambda I) = 0\). Note then that this is a polynomial equation; by the fundamental theorem of algebra, there exists \(\lambda \in \C\) such that \( \det (T - \lambda I) = 0\).

You can also prove the existence of eigenvalues by relying only on the consequence of the intermediate value theorem that every real odd-degree polynomials has a real root. By the argument that I have just sketched linear operators on odd-dimensional real vector spaces  have a real eigenvalue. You can then deduce algebraically the existence of eigenvalues for linear operators on any complex vector space \cite{Derksen}.

If you want you can also avoid determinants. For \(v \in V \setminus\{0\}\) and \(d = \dim V\), the vectors \(v, T(v), T^2(v), \dotsc, T^n(v)\) are to be linearly independent, that is \((a_n T^n + \dotsb + a_0 I)(v) = 0\) for some \(a_0, \dotsc, a_k \in \C\). By the fundamental theorem of algebra this can be rewritten as
\( (T-\lambda_1) \circ (T-\lambda_2) \circ \dotsb  \circ (T-\lambda_r)(v)= 0\) for some \(r \in \{1,\dotsc, n\}\) and \(\lambda_1, \lambda_2,\dotsc, \lambda_r \in \C\).  and \( (T-\lambda_j) \) is not invertible for some \(i \in \{1, \dotsc, r\}\) \citelist{\cite{AxlerBook}*{theorem 5.10}\cite{Axler}}.

\bigskip

All the approaches that I have presented rely on the existence of roots for polynomials. 
I would like to point out that eigenvector and eigenvalue are defined without reference to polynomials and that they are computed numerically most of the time without computing roots of polynomials. This suggests to search for polynomial-free proofs of the existence of eigenvectors and eigenvalues.

Indeed, there is another family of proofs coming from the spectral theory of normed algebras \citelist{\cite{Mazur}\cite{Mazet}\cite{Gelfand}}. The idea is to assume that the function \(\lambda \in \C \mapsto (T - \lambda I)^{-1}\) is well-defined and to derive a contradiction by Liouville's theorem, the maximum modulus principle, or some direct argument \citelist{\cite{Kametani}\cite{Tornheim}\cite{Singh}}.

I am proposing here a proof of theorem~\ref{theoremMain} that does not require any previous knowledge of polynomials and algebra of operators. 
The tools that are needed are quite elementary: continuous functions of several variables, definition of the inverse of an invertible linear map and summation formulas for geometric progressions.

I am following Argand's strategy to prove the fundamental theorem of algebra \cite{Numbers}*{\S 4.2}.  Given a polynomial \(P\), he minimizes \(z \in \C \mapsto \abs{P(z)}\) and then shows that the minimum value of that function should be \(0\). Here I minimize instead the function \( (v, \lambda) \in (V \setminus \{0\}) \times \C \to \frac{\norm{T(v)-\lambda v}}{\norm{v}}\). The problem of the existence of an eigenvector and an eigenvalue, consists in showing that the minimum is \(0\). 

\bigskip

Let us begin the proof consists by establishing the existence of a minimal pair \( (\Bar{v}, \Bar{\lambda})\). For the sake of the proof we endow \(V\) with an arbitrary norm.

\begin{proposition}
\label{propositionExistenceMinimum}
Let \(V\) be a finite-dimensional normed complex vector space. If \(T : V \to V\) is a linear operator and \(V \ne \{0\}\), then there exists \(\Bar{v} \in V \setminus \{0\}\) and \(\Bar{\lambda} \in \C\) such that for every \(v \in V \setminus \{0\}\) and \(\lambda \in \C\)
\[
  \frac{\norm{T(v)-\lambda v}}{\norm{v}}\ge \frac{\norm{T(\Bar{v})-\Bar{\lambda} \Bar{v}}}{\norm{v}}\:.
\]
\end{proposition}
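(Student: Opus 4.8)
The plan is to turn the problem into the minimization of a continuous function over a compact set, so that Weierstrass's theorem applies directly. The first observation is that the quotient $\norm{T(v)-\lambda v}/\norm{v}$ is unchanged if $v$ is multiplied by a nonzero scalar; hence it is enough to produce a minimizing pair $(\Bar v, \Bar\lambda)$ with $\norm{\Bar v}=1$, equivalently to minimize the function $f(v,\lambda) := \norm{T(v)-\lambda v}$ over $S \times \C$, where $S := \{v \in V \st \norm{v}=1\}$. Fixing a basis of $V$ expresses $f$ in coordinates through additions, multiplications and the norm, so $f$ is continuous on $V \times \C$; and since $V$ is finite-dimensional, $S$ is compact.

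The difficulty is that $S \times \C$ is not compact, because $\lambda$ ranges over all of $\C$. To remedy this I would use a coercivity estimate in $\lambda$: for $v \in S$ the triangle inequality gives
\[
  f(v,\lambda) = \norm{\lambda v - T(v)} \ge \abs{\lambda}\,\norm{v} - \norm{T(v)} = \abs{\lambda} - \norm{T(v)}.
\]
By Weierstrass's theorem applied to the continuous function $v \mapsto \norm{T(v)}$ on the compact set $S$, the number $L := \max_{v \in S} \norm{T(v)}$ is finite; fixing any $v_0 \in S$ and setting $c := \norm{T(v_0)} = f(v_0,0)$ and $R := c + L + 1$, the estimate above shows $f(v,\lambda) > c$ whenever $\abs{\lambda} \ge R$. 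Hence on the compact set $K := \{(v,\lambda) \in S \times \C \st \abs{\lambda} \le R\}$ — a product of the compact sphere with a closed disc — Weierstrass's theorem provides a minimizer $(\Bar v, \Bar\lambda)$ of $f$, and since $(v_0,0) \in K$ we get $f(\Bar v, \Bar\lambda) \le c < f(v,\lambda)$ for every $(v,\lambda)$ outside $K$. Therefore $(\Bar v, \Bar\lambda)$ minimizes $f$ over all of $S \times \C$.

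It then remains to undo the normalization: for arbitrary $v \in V \setminus \{0\}$ and $\lambda \in \C$, the vector $u := v/\norm{v}$ lies in $S$, so $\norm{T(v)-\lambda v}/\norm{v} = f(u,\lambda) \ge f(\Bar v, \Bar\lambda) = \norm{T(\Bar v) - \Bar\lambda \Bar v}$, which, since $\norm{\Bar v} = 1$, is the asserted inequality. The only genuine obstacle is the noncompactness of the parameter space: the $v$-direction is dealt with cheaply by the zeroth-order homogeneity and restriction to the unit sphere, but the unbounded $\lambda$-direction has to be truncated, and it is precisely the coercivity estimate together with the finiteness of $L$ that makes this truncation lossless. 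Everything else is the standard continuity-and-compactness bookkeeping behind an application of Weierstrass's theorem.
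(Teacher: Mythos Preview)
Your proof is correct and follows essentially the same approach as the paper: reduce to the unit sphere by homogeneity, use the coercivity estimate \(f(v,\lambda)\ge \abs{\lambda}-L\) (the paper's lemma~\ref{lemmaInequality}) to truncate the \(\lambda\)-range, apply Weierstrass on the resulting compact product set, and then undo the normalization. The only cosmetic difference is that the paper isolates the operator bound \(L\) as a preliminary lemma, whereas you obtain it by a second application of Weierstrass to \(v\mapsto\norm{T(v)}\) on \(S\).
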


Since the domain of the function that we are minimizing is not compact, we need a coercivity estimate on \(\lambda\) that will be provided by the boundedness of linear operators on finite-dimensional spaces.

\begin{lemma}
\label{lemmaInequality}
Let \(V\) be a finite-dimensional normed complex vector space. If \(T : V \to V\) is a linear operator, then there exists \(C \in [0, \infty) \) such that for every \(v \in V\)
\[
 \norm{T(v)}\le C \norm{v}
\]
and for every \(v \in V\) and \(\lambda \in \C\),
\[
 \norm{T(v)-\lambda v} \ge (\abs{\lambda}-C)\norm{v}\:.
\]
\end{lemma}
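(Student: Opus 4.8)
The plan is to obtain both inequalities from the boundedness of \(T\), and to obtain that boundedness from Weierstrass's theorem applied in coordinates. First I would fix a basis \(e_1, \dotsc, e_d\) of \(V\), so that every \(v \in V\) is written uniquely as \(v = z_1 e_1 + \dotsb + z_d e_d\) with \(z_1, \dotsc, z_d \in \C\). Two estimates are then immediate from the triangle inequality: with \(M = \max\{\norm{e_1}, \dotsc, \norm{e_d}\}\) and \(M' = \max\{\norm{T(e_1)}, \dotsc, \norm{T(e_d)}\}\), one has \(\norm{v} \le M(\abs{z_1} + \dotsb + \abs{z_d})\) and \(\norm{T(v)} \le M'(\abs{z_1} + \dotsb + \abs{z_d})\). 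The first of these shows in particular that the function \(f : \C^d \to \R\) given by \(f(z_1, \dotsc, z_d) = \norm{z_1 e_1 + \dotsb + z_d e_d}\) is Lipschitz, hence continuous, for the distance \((z, w) \mapsto \abs{z_1 - w_1} + \dotsb + \abs{z_d - w_d}\) on \(\C^d\).

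The heart of the matter is to reverse the first estimate. For this I would apply Weierstrass's theorem to \(f\) on the set \(K = \{(z_1, \dotsc, z_d) \in \C^d \st \abs{z_1} + \dotsb + \abs{z_d} = 1\}\), which is closed and bounded, hence compact. Since \(f\) vanishes only at the origin, which is not in \(K\), the minimum \(m = \min_K f\) is attained and strictly positive; homogeneity of \(f\) then upgrades this to \(\abs{z_1} + \dotsb + \abs{z_d} \le \norm{v}/m\) for every \(v = z_1 e_1 + \dotsb + z_d e_d\) in \(V\). Substituting into the second estimate gives \(\norm{T(v)} \le C \norm{v}\) with \(C = M'/m \in [0, \infty)\), which is the first assertion of the lemma.

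The second assertion then follows by combining the first one with the reverse triangle inequality: for every \(v \in V\) and \(\lambda \in \C\),
\[
  \norm{T(v) - \lambda v} \ge \norm{\lambda v} - \norm{T(v)} = \abs{\lambda}\norm{v} - \norm{T(v)} \ge (\abs{\lambda} - C)\norm{v}.
\]

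I expect the only genuine difficulty to be the reversed estimate \(\abs{z_1} + \dotsb + \abs{z_d} \le \norm{v}/m\): this is the single place where finite-dimensionality is really used, and it enters precisely through the compactness of \(K\) and Weierstrass's theorem. The one point requiring a little care is that ``continuous'' and ``compact'' are meant for the standard coordinate topology on \(\C^d\); the verification that \(f\) is continuous for that topology is exactly what the first triangle-inequality estimate provides.
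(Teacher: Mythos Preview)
Your argument is correct and matches the paper's proof: the second inequality is obtained in both cases by the (reverse) triangle inequality combined with the bound \(\norm{T(v)}\le C\norm{v}\). The only difference is that the paper simply invokes the first assertion as ``classical for linear operators on finite-dimensional normed vector spaces,'' whereas you spell out a standard proof of it via Weierstrass's theorem on the unit \(\ell^1\)-sphere in coordinates---a nice touch given the paper's theme, but not a genuinely different route.
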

\begin{proof}
The first assertion is classical for linear operators on finite-dimensional normed vector-spaces.
For the second assertion, by the triangle inequality,
\[
 \abs{\lambda}\norm{v} = \norm{\lambda v} \le \norm{T(v)-\lambda v} + \norm{T(v)}
\le \norm{T(v)-\lambda v} + C \norm{v}\:.\qedhere
\]
\end{proof}

We can now prove the existence of a minimal pair.

\begin{proof}[Proof of proposition~\ref{propositionExistenceMinimum}]
Define the function \(f : V \times \C \to \R\) for \(v \in V\) and \(\lambda \in \C\) by
\[
  f(v, \lambda)= \norm{T(v)- \lambda v}\:.
\]
Choose \(v_0 \in V\) with \(\norm{v_0}=1\). One has for every \(v \in V\) with \(\norm{v}=1\), by lemma~\ref{lemmaInequality} 
\[
 f(v, \lambda)=\norm{T(v)- \lambda v} \ge \abs{\lambda} - C\:.
\]
Therefore, if \(\abs{\lambda} > \norm{T(v_0)} + C\) and \(\norm{v}=1\),
\begin{equation}
\label{eqLargeLambda}
  f(v, \lambda) > f(v_0, 0)\:.
\end{equation}
Since \(f\) is continuous and the set 
\[
  \bigl\{(v, \lambda) \in V \times \C \st \norm{v}=1 \text{ and } \abs{\lambda} \le  \norm{T(v_0)} + C\bigr\}
\] 
is compact and not empty, by Weierstrass's theorem there exists \(\Bar{v} \in V\) and \(\Bar{\lambda}\in \C\) with \(\norm{\Bar{v}}=1\) and \(\abs{\Bar{\lambda}} \le  \norm{T(v_0)} + C\) such that for every \(v \in V\) with \(\norm{v}=1\) and \(\lambda \in \C\) with \(\abs{\lambda} \le  \norm{T(v_0)} + C\),
\begin{equation}
\label{eqMinf}
  f(v, \lambda) \ge f(\Bar{v}, \Bar{\lambda})\:.
\end{equation}

By \eqref{eqLargeLambda}, the inequality \eqref{eqMinf} also holds for \(v \in V\) with \(\norm{v}=1\) and \(\lambda \in \C\) with \(\abs{\lambda} > \norm{T(v_0)} + C\).
Finally, if \(v \in V \setminus \{0\}\) and \(\lambda \in \C\), one has 
\[
\frac{\norm{T(v)-\lambda v}}{\norm{v}}= f( \norm{v}^{-1}v, \lambda) \ge f( \Bar{v}, \Bar{\lambda}) = \frac{\norm{T(\Bar{v})-\Bar{\lambda} \Bar{v}}}{\norm{\Bar{v}}}\:. \qedhere
\]
\end{proof}

The second part of the proof is to show that the a minimal pair yields an eigenvector and its associated eigenvalue.

\begin{proposition}
\label{propositionCaractMinimum}
Let \(V\) be a finite-dimensional normed vector space over \(\C\), \(T : V \to V\) be a linear operator, \(\Bar{v} \in V \setminus \{0\}\) and \(\Bar{\lambda} \in \C\). If for every \(v \in V \setminus \{0\}\) and \(\lambda \in \C\),
\[
  \frac{\norm{T(v)-\lambda v}}{\norm{v}}\ge \frac{\norm{T(\Bar{v})-\Bar{\lambda} \Bar{v}}}{\norm{v}}\:,
\]
then \(T(\Bar{v})=\Bar{\lambda} \Bar{v}\).
\end{proposition}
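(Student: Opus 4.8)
The plan is to argue by contradiction, in the spirit announced in the introduction: supposing that \(w := T(\Bar v) - \Bar\lambda \Bar v \neq 0\), I will contradict the minimality of \((\Bar v,\Bar\lambda)\), which amounts to showing that the minimum value cannot be positive. Rescaling \(\Bar v\) (both sides of the hypothesis, and the desired conclusion, are unaffected) I may assume \(\norm{\Bar v}=1\), so that \(m := \norm{T(\Bar v) - \Bar\lambda \Bar v} = \norm{w} > 0\). The first step is to read the hypothesis as a coercivity estimate: for every \(v \in V \setminus\{0\}\) and every \(\lambda \in \C\) it says \(\norm{T(v) - \lambda v} \ge m \norm{v}\). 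Hence every operator \(T - \lambda I\) is injective, so — \(V\) being finite-dimensional — invertible, with \(\norm{(T - \lambda I)^{-1}} \le 1/m\); thus \(\lambda \mapsto (T - \lambda I)^{-1}\) is defined and bounded on all of \(\C\).

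Second, I introduce the \(V\)-valued function \(v_\lambda := (T - \lambda I)^{-1} w\). By construction \((T - \lambda I) v_\lambda = w\), and since \((T-\Bar\lambda I)\Bar v = w\) one has \(v_{\Bar\lambda}=\Bar v\). Moreover \(v_\lambda \neq 0\), so feeding the pair \((v_\lambda,\lambda)\) into the hypothesis gives \(m \le \norm{(T-\lambda I)v_\lambda}/\norm{v_\lambda} = \norm{w}/\norm{v_\lambda} = m/\norm{v_\lambda}\), hence \(\norm{v_\lambda} \le 1\) for \emph{every} \(\lambda \in \C\), with equality at \(\lambda = \Bar\lambda\). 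For large \(\abs{\lambda}\), instead, the summation formula for the geometric progression yields, for \(\abs{\lambda}\) larger than the constant \(C\) of lemma~\ref{lemmaInequality}, the identity \((T - \lambda I)^{-1} = -\sum_{k \ge 0}\lambda^{-k-1}T^k\) and therefore \(\norm{v_\lambda} \le \norm{w}/(\abs{\lambda}-C) \to 0\) as \(\abs{\lambda} \to \infty\). So the continuous function \(\lambda \mapsto \norm{v_\lambda}\) is bounded by \(1\), attains \(1\) at \(\Bar\lambda\), and decays to \(0\) at infinity — and the contradiction I am after is that these three facts are incompatible.

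Third, I extract this incompatibility by a maximum-principle argument driven once more by geometric series. Around an arbitrary \(\lambda_0\), the uniform bound \(\norm{(T - \lambda_0 I)^{-1}} \le 1/m\) makes the Neumann series \((T - \lambda I)^{-1} = \sum_{k \ge 0}(\lambda - \lambda_0)^k (T - \lambda_0 I)^{-k-1}\) converge for \(\abs{\lambda - \lambda_0} < m\), uniformly on smaller disks, so that there \(v_\lambda\) is a convergent power series in \(\lambda - \lambda_0\). Averaging this series over the \(N\)-th roots of unity scaled by \(r < m\), a finite geometric sum annihilates every non-constant term in the limit \(N \to \infty\), and one gets that \(\norm{v_{\lambda_0}}\) is at most the average of \(\norm{v_\lambda}\) over the circle \(\abs{\lambda - \lambda_0} = r\) (equivalently \(v_{\lambda_0} = \frac{1}{2\pi}\int_0^{2\pi} v_{\lambda_0 + re^{i\theta}}\,d\theta\), computed coordinatewise). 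Applying this at a maximiser of \(\norm{v_\lambda}\), say \(\lambda_0 = \Bar\lambda\) where the value is \(1\), and using \(\norm{v_\lambda} \le 1\) everywhere, forces \(\norm{v_\lambda} = 1\) on the whole disk \(\abs{\lambda - \lambda_0} < m\); every point of that disk is then again a maximiser, so the equality \(\norm{v_\lambda} = 1\) propagates to ever larger disks and, by connectedness of \(\C\), to all of \(\C\) — contradicting \(\norm{v_\lambda} \to 0\). Hence \(w = 0\), that is, \(T(\Bar v) = \Bar\lambda \Bar v\).

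The step I expect to be the genuine obstacle is this last one: the elementary maximum-principle fact that \(\lambda \mapsto \norm{v_\lambda}\), with \(v_\lambda\) given locally by convergent power series and bounded by its value at a single point, must be constant. The two points needing care are that the radius of convergence of the Neumann series stays \(\ge m\) independently of the centre \(\lambda_0\) — which is exactly what the uniform estimate \(\norm{(T - \lambda_0 I)^{-1}} \le 1/m\) from the first step guarantees — and that the circular average of the \(V\)-valued \(v_\lambda\) extracts precisely the constant term of its expansion; everything else reduces to bookkeeping with finite geometric sums and the triangle inequality.
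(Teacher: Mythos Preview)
Your argument is correct and runs parallel to the paper's, but the key propagation step is packaged differently. Both proofs read the hypothesis as \(\norm{(T-\lambda I)^{-1}}\le 1/\Bar c\) for every \(\lambda\), work with the same resolvent vector \(v_\lambda=(T-\lambda I)^{-1}\bigl(T(\Bar v)-\Bar\lambda\,\Bar v\bigr)\) (this is exactly the \(v\) produced at the end of the proof of lemma~\ref{lemmaMinimumExtension}), propagate the extremality outward from \(\Bar\lambda\) in steps of fixed size \(\Bar c\), and finish by colliding with the coercivity at infinity from lemma~\ref{lemmaInequality}. The paper carries out the propagation via a single finite algebraic identity, lemma~\ref{lemmaSumInverses} (the Tornheim--Kametani sum of resolvents over \(n\)-th roots of unity), applied to \(\Bar v\), and then lets \(n\to\infty\) to kill the remainder \((\sigma S^{-1})^n\); you instead expand \(v_\lambda\) as a Neumann power series about each centre, average over roots of unity to recover the mean-value identity, and run a maximum-modulus argument on \(\lambda\mapsto\norm{v_\lambda}\). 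This is precisely the ``maximum modulus principle'' route alluded to in the introduction and attributed there to \cite{Kametani}, \cite{Tornheim}, \cite{Singh}; your framing yields a cleaner narrative (a bounded function with the sub-mean-value property that attains its supremum must be constant, contradicting decay at infinity), whereas the paper's buys an argument that never invokes power-series expansions or vector-valued integrals, resting only on one explicit operator identity and the triangle inequality.
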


The key point consists in proving that if a minimal pair that does not yield an eigenvector and its associated eigenvalue, then it yields additional minimal pairs.

\begin{lemma}
\label{lemmaMinimumExtension}
Let \(V\) be a finite-dimensional normed complex vector space,  \(T : V \to V\) be a linear operator, \(\Bar{v} \in V \setminus \{0\}\) and \(\Bar{\lambda} \in \C\). If for every \(v \in V \setminus \{0\}\) and \(\lambda \in \C\),
\[
  \frac{\norm{T(v)-\lambda v}}{\norm{v}}\ge \Bar{c}:=\frac{\norm{T(\Bar{v})-\Bar{\lambda} \Bar{v}}}{\norm{v}}\:,
\]
then, for every \(\lambda \in \C\) with \(\abs{\lambda - \Bar{\lambda}} < \Bar{c}\), there exists \(v \in V \setminus \{0\}\) such that 
\[
 \frac{\norm{T(v)-\lambda v}}{\norm{v}}= \Bar{c}\:.
\]
\end{lemma}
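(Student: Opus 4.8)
The plan is to argue by contradiction, using in place of the maximum modulus principle of the classical resolvent argument an elementary averaging of the resolvent over the vertices of a regular polygon; the only analytic input is the summation of geometric progressions, of complex numbers and of operators.

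First I would dispose of the trivial cases: if \(\Bar{c}=0\) there is no \(\lambda\) with \(\abs{\lambda-\Bar{\lambda}}<\Bar{c}\), so the statement is vacuous, and if \(\lambda=\Bar{\lambda}\) one takes \(v=\Bar{v}\). So assume \(\Bar{c}>0\) and \(0<\abs{\lambda-\Bar{\lambda}}<\Bar{c}\). For \(\mu\in\C\) set \(g(\mu):=\min\{\norm{T(v)-\mu v}/\norm{v}\st v\in V\setminus\{0\}\}\), the minimum existing by Weierstrass's theorem just as in proposition~\ref{propositionExistenceMinimum}; the hypothesis says precisely that \(g\ge\Bar{c}>0\) everywhere and \(g(\Bar{\lambda})=\Bar{c}\). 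Since \(\norm{T(v)-\mu v}\ge\Bar{c}\norm{v}>0\) for \(v\ne 0\), each \(T-\mu I\) is injective, hence invertible, and substituting \(u=(T-\mu I)(v)\) gives \(\norm{(T-\mu I)^{-1}(u)}\le\norm{u}/g(\mu)\le\norm{u}/\Bar{c}\) for all \(u\in V\). Writing \(S:=T-\Bar{\lambda}I\), \(w:=S(\Bar{v})\) and \(\nu:=\lambda-\Bar{\lambda}\), one then has \(\norm{w}=\Bar{c}\norm{\Bar{v}}\), \(S^{-1}(w)=\Bar{v}\), and \(\norm{S^{-k}(u)}\le\norm{u}/\Bar{c}^{k}\) for all \(u\) and all \(k\).

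Then I would assume, towards a contradiction, that \(g(\lambda)>\Bar{c}\), and fix \(N\in\N\) with \(\omega:=e^{2\pi i/N}\). For each \(j\), since \(\abs{\nu\omega^{j}}/\Bar{c}=\abs{\nu}/\Bar{c}<1\), the Neumann series (a convergent geometric progression of operators) gives \((S-\nu\omega^{j}I)^{-1}(w)=\sum_{k\ge 0}(\nu\omega^{j})^{k}S^{-(k+1)}(w)=\sum_{k\ge 0}\nu^{k}\omega^{jk}S^{-k}(\Bar{v})\). Averaging over \(j\) and using that \(\tfrac1N\sum_{j=0}^{N-1}\omega^{jk}\) (a finite geometric sum) equals \(1\) when \(N\mid k\) and \(0\) otherwise — the interchange of the two summations being legitimate by absolute convergence — all terms but the multiples of \(N\) cancel, leaving
\[
 z_{N}:=\frac1N\sum_{j=0}^{N-1}(S-\nu\omega^{j}I)^{-1}(w)=\sum_{m\ge 0}\nu^{Nm}S^{-Nm}(\Bar{v})\:.
\]
I would then estimate \(\norm{z_{N}}\) from both sides. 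With \(q:=\abs{\nu}/\Bar{c}\in(0,1)\), the \(m=0\) term is \(\Bar{v}\) and \(\norm{\nu^{Nm}S^{-Nm}(\Bar{v})}\le q^{Nm}\norm{\Bar{v}}\), so the triangle inequality gives \(\norm{z_{N}}\ge\norm{\Bar{v}}\bigl(1-q^{N}/(1-q^{N})\bigr)\). On the other hand \(\norm{(S-\nu\omega^{j}I)^{-1}(w)}\le\norm{w}/g(\Bar{\lambda}+\nu\omega^{j})\), and since the single index \(j=0\) contributes \(g(\lambda)>\Bar{c}\) while the remaining \(N-1\) indices contribute \(g\ge\Bar{c}\),
\[
 \norm{z_{N}}\le\frac{\norm{w}}{N}\Bigl(\frac{1}{g(\lambda)}+\frac{N-1}{\Bar{c}}\Bigr)=\norm{\Bar{v}}\Bigl(1-\frac{\beta}{N}\Bigr),\qquad \beta:=1-\frac{\Bar{c}}{g(\lambda)}\in(0,1)\:.
\]
Comparing the two bounds forces \(\beta/N\le q^{N}/(1-q^{N})\) for every \(N\in\N\), which is impossible because \(Nq^{N}\to 0\). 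Hence \(g(\lambda)=\Bar{c}\), and any minimizer \(v\in V\setminus\{0\}\) of \(\norm{T(v)-\lambda v}/\norm{v}\) — furnished by Weierstrass's theorem — satisfies \(\norm{T(v)-\lambda v}/\norm{v}=\Bar{c}\).

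The step I expect to be the genuine obstacle is recognizing why a direct estimate does not suffice: expanding \((S-\nu I)^{-1}\) in its Neumann series only reproduces the Lipschitz bound \(g(\lambda)\ge\Bar{c}-\abs{\nu}\), which is weaker than the hypothesis \(g\ge\Bar{c}\) and hence yields nothing. The extra ingredient that must be exploited is that \(S^{-1}\) \emph{attains} its norm \(1/\Bar{c}\), at \(w\), with \(S^{-1}(w)=\Bar{v}\); the average over the polygon vertices is exactly the elementary substitute for the maximum modulus principle that converts this attainment into the rigidity of \(g\) near \(\Bar{\lambda}\). Everything else — the absolute convergence justifying the rearrangement, and the two displayed estimates — is routine.
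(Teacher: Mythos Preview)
Your proof is correct and rests on the same central mechanism as the paper's: average the resolvents \((T-\mu I)^{-1}\) over the \(N\)-th roots of unity centred at \(\Bar{\lambda}\), so that only one of the \(N\) terms sees the point \(\lambda\) while the remaining \(N-1\) are controlled by the global bound \(\Bar{c}\). The executions differ in two respects. First, where you expand each \((S-\nu\omega^{j}I)^{-1}\) as a convergent Neumann series and interchange summations to obtain \(z_{N}=\sum_{m\ge 0}\nu^{Nm}S^{-Nm}(\Bar{v})\), the paper instead establishes a \emph{finite} algebraic identity (its lemma~\ref{lemmaSumInverses}),
\[
\sum_{j=0}^{n-1}(S-\omega^{j}\sigma I)^{-1}\circ\bigl(I-(\sigma S^{-1})^{n}\bigr)\circ S=nI\,,
\]
so that no infinite series enters and the error term is the explicit finite correction \((\sigma S^{-1})^{n}\). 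Second, the paper argues directly rather than by contradiction: it shows that the concrete vector \(v=(T-\lambda I)^{-1}(T-\Bar{\lambda}I)(\Bar{v})\) satisfies \(\norm{T(v)-\lambda v}\le\Bar{c}\norm{v}\), whence equality by the hypothesis; you instead deduce \(g(\lambda)=\Bar{c}\) and invoke Weierstrass's theorem once more to produce a minimiser. Your route is perhaps more transparent to a reader who thinks in terms of resolvent expansions, while the paper's buys a slightly more elementary toolkit (only finite sums, in keeping with its stated aims) and an explicit witness \(v\).
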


The proof relies on the next algebraic computation. Recall that \(I\) denotes the identity map.

\begin{lemma}
\label{lemmaSumInverses}
Let \(V\) be a finite-dimensional normed complex vector space, \(S \colon V \to V\) be a linear operator, \(\sigma \in \C\), \(\omega \in \C\) and \(n \in \N\). If for every \(j \in \{1, \dotsc, n-1\}\), \(\omega^j \ne 1\) and \(\omega^n=1\),
\(S\) is invertible and for every \(j \in \{0, \dotsc, n-1\}\), \( S - \omega^j \sigma I\) is invertible, then
\[
  \sum_{j=0}^{n-1} (S-\omega^j \sigma I)^{-1}\circ \bigl(I- (\sigma S^{-1})^n\bigr)\circ S=n I\:.
\]
\end{lemma}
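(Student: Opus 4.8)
The plan is to reduce the operator identity to the elementary partial-fraction identity \(\sum_{j=0}^{n-1}\frac{1-x^n}{1-\omega^j x}=n\), valid whenever \(\omega\) is a primitive \(n\)-th root of unity, exploiting that \(\sigma I\), \(S\) and \(S^{-1}\) commute pairwise, so that every operator appearing in the statement commutes with every other. Concretely, I would set \(A := \sigma S^{-1}\) and note that \((\sigma S^{-1})^n = \sigma^n S^{-n} = A^n\) and \(S - \omega^j\sigma I = S\circ (I-\omega^j A) = (I-\omega^j A)\circ S\) for each \(j\). Since \(S\) and \(S-\omega^j\sigma I\) are invertible, so is \(I-\omega^j A = S^{-1}\circ(S-\omega^j\sigma I)\), with \((S-\omega^j\sigma I)^{-1} = (I-\omega^j A)^{-1}\circ S^{-1}\). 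Substituting this together with \(I-(\sigma S^{-1})^n = I-A^n\) into the sum and cancelling \(S^{-1}\circ S\) by commutativity turns each summand into \((I-\omega^j A)^{-1}\circ(I-A^n)\), so it is enough to prove that \(\sum_{j=0}^{n-1}(I-\omega^j A)^{-1}\circ(I-A^n)=nI\).

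For a commuting operator \(B\) the finite geometric summation formula \((I-B)\circ\sum_{k=0}^{n-1}B^k = I-B^n\) holds verbatim, with the same one-line proof as for scalars. Applying it with \(B=\omega^j A\) and using \(B^n=\omega^{jn}A^n = A^n\) (because \(\omega^n=1\)) gives \((I-\omega^j A)^{-1}\circ(I-A^n)=\sum_{k=0}^{n-1}\omega^{jk}A^k\). Summing over \(j\) and exchanging the two finite sums then yields \(\sum_{j=0}^{n-1}\sum_{k=0}^{n-1}\omega^{jk}A^k = \sum_{k=0}^{n-1}\bigl(\sum_{j=0}^{n-1}\omega^{jk}\bigr)A^k\). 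The inner sum equals \(n\) when \(k=0\); when \(k\in\{1,\dotsc,n-1\}\) one has \(\omega^k\ne1\) by hypothesis, so by the scalar geometric summation formula \(\sum_{j=0}^{n-1}(\omega^k)^j = \frac{\omega^{kn}-1}{\omega^k-1}=0\) since \(\omega^{kn}=(\omega^n)^k=1\). Hence the whole expression collapses to \(n\,A^0=nI\), as claimed.

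The computations are entirely routine once everything is set up; the only point needing a little care --- the ``main obstacle'', such as it is --- is the bookkeeping that justifies treating \(S\), \(S^{-1}\) and \(\sigma I\) as commuting quantities, together with the verification that \(I-\omega^j A\) is genuinely invertible for every \(j\in\{0,\dotsc,n-1\}\), which for \(j=0\) uses the invertibility of \(S-\sigma I\). Once that is secured, each manipulation above is just the operator analogue of a one-line identity about rational functions.
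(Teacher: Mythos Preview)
Your proof is correct and follows essentially the same route as the paper: both arguments reduce to the geometric-sum identity \((I-B)\sum_{k=0}^{n-1}B^k=I-B^n\) and then collapse the double sum via the orthogonality relation \(\sum_{j=0}^{n-1}\omega^{jk}=n\delta_{k,0}\). The only cosmetic difference is that you first substitute \(A=\sigma S^{-1}\) to make the scalar analogy \(\sum_j\frac{1-x^n}{1-\omega^j x}=n\) explicit, whereas the paper works directly with \((S-\omega^j\sigma I)^{-1}\circ(S^n-\sigma^n I)=\sum_{k=0}^{n-1}\omega^{jk}\sigma^k S^{n-k-1}\) and then composes with \(S^{-(n-1)}\) at the end.
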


This formula is proved by L.~Tornheim (in the framework of normed fields) \cite{Tornheim}*{p.~63};	 Kametani states it without proof \cite{Kametani}*{p.~98, last line}.

\begin{proof}[Proof of lemma~\ref{lemmaSumInverses}]
First note that
\begin{equation}
\label{eqISn}
 I- (\sigma S^{-1})^n=(S^n - \sigma^n I)\circ S^{-n}\:. 
\end{equation}
One shows by induction on \(n\) that
\[
  (S-\omega^j \sigma I)^{-1} \circ (S^n- \sigma^n I)=(S-\omega^j \sigma I)^{-1} \circ \bigl(S^n- (\omega ^j\sigma I)^n\bigr)= \sum_{k=0}^{n-1} \omega^{kj}\sigma^{k} S^{n - k - 1}\:.
\]
Hence,
\[
 \sum_{j=0}^{n-1} (S-\omega^j \sigma I)^{-1}\circ (S^{n} - \sigma^n I)
=\sum_{j=0}^{n-1} \sum_{k=0}^{n-1} \omega^{kj}\sigma^{k} S^{n - k - 1}\:.
\]
Now recalling that
\[
 \sum_{j=0}^{n-1}  \omega^{kj} =
  \left\{
   \begin{aligned}
     &\frac{1-\omega^{kn}}{1-\omega^{k\phantom{n}}} = 0 && \text{if \(\omega^k \ne 1\) },\\
     &n && \text{if \(\omega^k = 1\) },
   \end{aligned}
  \right.
\]
we have
\[
 \sum_{j=0}^{n-1} (S-\omega^j \sigma I)^{-1}\circ (S^{n} - \sigma^n I)= n S^{n-1}\:,
\]
from which we conclude with the help of \eqref{eqISn}.
\end{proof}

\begin{proof}[Proof of lemma~\ref{lemmaMinimumExtension}]
We can assume that \(\Bar{c} > 0\). For every \(v \in V \setminus \{0\}\) and \(\lambda \in \C\),
\[
  \norm{T(v)-\lambda v} \ge \Bar{c}\norm{v}\:, 
\]
hence \( T- \lambda I \) is one-to-one and invertible, and  for every \(v \in V \setminus \{0\}\) and \(\lambda \in \C\),
\begin{equation}
\label{ineqinv}
  \norm{(T - \lambda I)^{-1} (v)} \le \frac{\norm{v}}{\Bar{c}}\:, 
\end{equation}
Let \(n \in \N\) and set \(\omega=\cos \frac{2\pi}{n} + i \sin \frac{2\pi}{n}\). By lemma~\ref{lemmaSumInverses} with \(S=T-\lambda I\) and \(\sigma = \Bar{\lambda} - \lambda\) and by the triangle inequality, we have
\[
 n \norm{\Bar{v}}\le \sum_{j=0}^{n-1} \bignorm{\bigl((T-\Bar{\lambda}I-\omega^j (\lambda -\Bar{\lambda}) I)^{-1}\circ \bigl(I- ((\lambda -\Bar{\lambda})(T-\Bar{\lambda}I)^{-1})^n\bigr)\circ (T-\Bar{\lambda}I)\bigr)(\Bar{v})}.
\]
Now, one has for \(j \in \{1, \dotsc, n-1\}\) by \eqref{ineqinv}
\begin{multline*}
\bignorm{\bigl((T-\Bar{\lambda}I-\omega^j (\lambda -\Bar{\lambda}) I)^{-1}\circ \bigl(I- ((\lambda -\Bar{\lambda})(T-\Bar{\lambda}I)^{-1})^n\bigr)\circ (T-\Bar{\lambda}I)\bigr)(\Bar{v})}\\
\le \frac{1}{\Bar{c}} \bignorm{\bigl(I- ((\lambda -\Bar{\lambda})(T-\Bar{\lambda}I)^{-1})^n\bigr)\circ (T-\Bar{\lambda}I)\bigr)(\Bar{v})}\\
\le \frac{1}{\Bar{c}} \Bigl(\bignorm{T(\Bar{v})-\Bar{\lambda}\Bar{v}}+ \bignorm{\bigl((\lambda -\Bar{\lambda})(T-\Bar{\lambda}I)^{-1})^n\bigr)\circ (T-\Bar{\lambda}I)\bigr)(\Bar{v})}\Bigr).
\end{multline*}
On the other hand we have by  \eqref{ineqinv}
\begin{multline*}
\bignorm{\bigl((T-\Bar{\lambda}I- (\lambda -\Bar{\lambda}) I)^{-1}\circ \bigl(I- ((\lambda -\Bar{\lambda})(T-\Bar{\lambda}I)^{-1})^n\bigr)\circ (T-\Bar{\lambda}I)\bigr)(\Bar{v})}\\
\le \bignorm{\bigl((T-\lambda I)^{-1}\circ (T-\Bar{\lambda} I)\bigr)(\Bar{v})}\\+\frac{1}{\Bar{c}} \bignorm{\bigl((\lambda -\Bar{\lambda})(T-\Bar{\lambda}I)^{-1})^n\bigr)\circ (T-\Bar{\lambda}I)\bigr)(\Bar{v})}\:.
\end{multline*}
Therefore, using the previous inequalities, we obtain
\begin{multline*}
n \bignorm{T(\Bar{v})-\Bar{\lambda}\Bar{v}} = n \Bar{c} \norm{\Bar{v}}\\
 \le (n - 1) \bignorm{T(\Bar{v})-\Bar{\lambda}\Bar{v}} + \Bar{c} \bignorm{\bigl((T-\lambda I)^{-1}\circ (T-\Bar{\lambda} I)\bigr)(\Bar{v})}\\
 + n \bignorm{\bigl((\lambda -\Bar{\lambda})(T-\Bar{\lambda}I)^{-1})^n\bigr)\circ (T-\Bar{\lambda}I)\bigr)(\Bar{v})}\:.
\end{multline*}
Applying \(n\) times \eqref{ineqinv} we conclude that
\begin{multline*}
\bignorm{T(\Bar{v})-\Bar{\lambda}\Bar{v}}\\
 \le \Bar{c} \bignorm{\bigl((T-\lambda I)^{-1}\circ (T-\Bar{\lambda} I)\bigr)(\Bar{v})} + n \bignorm{\bigl((\lambda -\Bar{\lambda})(T-\Bar{\lambda}I)^{-1})^n\bigr)\circ (T-\Bar{\lambda}I)\bigr)(\Bar{v})}\\
\le \Bar{c} \bignorm{\bigl((T-\lambda I)^{-1}\circ (T-\Bar{\lambda} I)\bigr)(\Bar{v})}+  n\Bigl(\frac{\abs{\Bar{\lambda}-\lambda}}{\Bar{c}}\Bigr)^n \norm{T(\Bar{v})-\lambda \Bar{v}}\:.
\end{multline*}
Since \(\abs{\lambda - \Bar{\lambda} } < \Bar{c} \), by letting \(n \to \infty\) and taking \(v = \bigl((T-\lambda I)^{-1}\circ (T-\Bar{\lambda} I)\bigr)(\Bar{v})\), we have 
\[
 \norm{T(v)-\lambda v} \le \Bar{c}\bignorm{v}\:,
\]
from which the conclusion follows.
\end{proof}

\begin{proof}[Proof of proposition~\ref{propositionCaractMinimum}]
Assume by contradiction that \(T(\Bar{v})\ne \Bar{\lambda}\Bar{v}\). 
By lemma~\ref{lemmaMinimumExtension}, for every \( \lambda \in \C\) with \(\abs{\lambda - \Bar{\lambda}} < \Bar{c}\), there exists \(v \in V\) such that 
\begin{equation}
\label{eqNewv}
 \Bar{c}=\frac{\norm{T(v)-\lambda v}}{\norm{v}}\:.
\end{equation}
Applying again lemma~\ref{lemmaMinimumExtension} inductively, for every \( \lambda \in \C\) and \(n \in \N\) such that \(\abs{\lambda - \Bar{\lambda}} < n \Bar{c}\), there exists \(v \in V\) such that \eqref{eqNewv} holds.

On the other hand, by lemma~\ref{lemmaInequality}, if \(\abs{\lambda} > \Bar{c}+C\),
\[
  \frac{\norm{T(v)-\lambda v}}{\norm{v}} \ge \abs{\lambda} - C > \Bar{c}\:,
\]
in contradiction with \eqref{eqNewv}.
\end{proof}

\section*{Acknowledgements}

I would like to thank Augusto Ponce for many discussions about teaching analysis and Laure Ninove for many discussion on (determinant-free) linear algebra.

\begin{bibdiv}

\begin{biblist}
\bib{Axler}{article}{
   author={Axler, Sheldon},
   title={Down with determinants!},
   journal={Amer. Math. Monthly},
   volume={102},
   date={1995},
   number={2},
   pages={139--154},
   issn={0002-9890},
}

\bib{AxlerBook}{book}{
   author={Axler, Sheldon},
   title={Linear algebra done right},
   series={Undergraduate Texts in Mathematics},
   edition={2},
   publisher={Springer-Verlag},
   place={New York},
   date={1997},
   pages={xvi+251},
   isbn={0-387-98258-2},
}

\bib{Derksen}{article}{
   author={Derksen, Harm},
   title={The fundamental theorem of algebra and linear algebra},
   journal={Amer. Math. Monthly},
   volume={110},
   date={2003},
   number={7},
   pages={620--623},
   issn={0002-9890},
}

\bib{Gelfand}{article}{
   author={Gelfand, Israel},
   title={Normierte Ringe},
   journal={Rec. Math. [Mat. Sbornik] N. S.},
   volume={9 (51)},
   date={1941},
   pages={3--24},
}

\bib{Kametani}{article}{
   author={Kametani, Shunzi},
   title={An elementary proof of the fundamental theorem of normed fields},
   journal={J. Math. Soc. Japan},
   volume={4},
   date={1952},
   pages={96--99},
   issn={0025-5645},
}

\bib{Mazet}{article}{
   author={Mazet, Pierre},
   title={La preuve originale de S. Mazur pour son th\'eor\`eme sur les
   alg\`ebres norm\'ees},
   journal={Gaz. Math.},
   number={111},
   date={2007},
   pages={5--11},
   issn={0224-8999},
}

\bib{Mazur}{article}{
  author={Mazur, Stanis\l aw},
  title={Sur les anneaux lin\'eaires},
  journal={C. R. Acad. Sci., Paris},
  volume={207}, 
  pages={1025-1027},
  date={1938},
}

\bib{Numbers}{book}{
   author={Ebbinghaus, H.-D.},
   author={Hermes, H.},
   author={Hirzebuch, F.},
   author={Koecher, M.},
   author={Mainzer, K.},
   author={Neukirch, J.},
   author={Prestel, A.},
   author={Remmert, R.},
   publisher={Springer},
   series={Graduate Texts in Mathematics},
   edition={2},
   title={Numbers},
   date={1991},
}

\bib{Singh}{article}{
   author={Singh, Dinesh},
   title={The spectrum in a Banach algebra},
   journal={Amer. Math. Monthly},
   volume={113},
   date={2006},
   number={8},
   pages={756--758},
   issn={0002-9890},
}

\bib{Tornheim}{article}{
   author={Tornheim, Leonard},
   title={Normed fields over the real and complex fields},
   journal={Michigan Math. J.},
   volume={1},
   date={1952},
   pages={61--68},
   issn={0026-2285},
}

\end{biblist}
 
\end{bibdiv}

\end{document}